\title{On the Weak-Lefschetz property for Artinian Gorenstein algebras} 
\author{Alfio Ragusa
	\and  Giuseppe Zappal\`a}
\subjclass[2000]{13 D 40, 13 H 10}
\keywords{Weak-Lefschetz property, Gorenstein algebras, Betti numbers, Hilbert function}
\DeclareSymbolFont{rsfscript}{OMS}{rsfs}{m}{n}
\DeclareSymbolFontAlphabet{\mathrsfs}{rsfscript}
\DeclareSymbolFont{AMSb}{U}{msb}{m}{n}
\DeclareSymbolFontAlphabet{\mathbb}{AMSb}
\DeclareSymbolFont{eufrak}{U}{euf}{m}{n}
\DeclareSymbolFontAlphabet{\gothic}{eufrak}
\def\ac{\`}
\newcommand{\f}{\footnotesize}
\newcommand{\pp}{\mathbb P}
\newcommand\pf{\operatorname{pf}}
\newcommand\Gor{\operatorname{Gor}}
\newcommand\zz{{\mathbb Z}}
\newcommand{\BG}{\operatorname{BG}}
\newtheorem{thm}{Theorem}[section]
\newtheorem{lem}[thm]{Lemma}
\newtheorem{prp}[thm]{Proposition}
\newtheorem{cor}[thm]{Corollary}
\theoremstyle{definition}
\theoremstyle{remark}
\newtheorem{rem}[thm]{Remark}
\newtheorem{exm}[thm]{Example}
\newcounter{num}
\begin{document}


\begin{abstract}
We deal with the Weak Lefschetz property (WLP) for Artinian standard graded Gorenstein algebras of codimension $3.$  We prove that many Gorenstein sequences force the WLP for such algebras. Moreover for every Gorenstein sequence $H$ of codimension $3$ we found several Gorenstein Betti sequences compatible with $H$ which again force the WLP. Finally we show that for every Gorenstein Betti sequence the general Artinian standard graded Gorenstein algebra with such Betti sequence has the WLP.
\end{abstract}



\maketitle

\section*{Introduction}
\markboth{\it Introduction}{\it Introduction}

Let $R = k[x_1, . . . , x_r],$  where $k$ is a field of characteristic zero and let $I$ be an Artinian monomial complete intersection in $R,$ i.e. $I = (x^{a_1}, \ldots, x^{a_r}).$ Take a  general linear form $l\in R$. Then for any positive integers $d$ and $i,$ the  multiplication map defined by $l^d$, $\times l^d : (R/I)_i \to (R/I)_{i+d}$ has maximal rank. (In particular, this is true when $d = 1$).
The Weak and Strong Lefschetz properties were originated by the above result which was proved by different approaches by R. Stanley in \cite{St2} by algebraic topology,
by J.Watanabe in \cite{W} using representation theory, by Reid, Roberts and Roitman in \cite{RRR} by algebraic methods, by Herzog and Popescu (see \cite{HP}) by linear algebra and by Ikeda \cite{Ik}  using combinatorics. Since then the Weak and the Strong Lefschetz properties have been investigated very intensively. It is trivial to see that all Artinian standard graded algebras of codimension $2$ have the Weak Lefschetz Property, but already in codimension $3$ many questions are still open. A good survey on this subject could be the paper \cite{MN}. One of the most important results on this direction is the proof of the Weak Lefschetz Property for all Artinian complete intersections of codimension $3$, due to T.~Harima, J.C.~Migliore, U.~Nagel, J.~Watanabe, in \cite{HMNW}. Nevertheless, again in codimension $3,$ it is enough to consider almost complete intersection ideals,  i.e. perfect ideals of height $3$ and minimally generated by $4$ elements, to produce examples of Artinian standard graded algebras not enjoying the Weak Lefschetz Property (see \cite{BK}). But, as it is known, a natural class which generalizes Artinian complete intersections is the class of Artinian  Gorenstein standard graded algebras. Unfortunately, already in codimension $4$ there are  Artinian  Gorenstein standard graded algebras which do not have the Weak Lefschetz property (cfr. \cite{Ik}). Thus it remains to understand if every Artinian  Gorenstein standard graded algebras of codimension $3$ enjoys the Weak Lefschetz property. Results in special cases on this subject can be found in \cite{MZ} and \cite{A}. So the purpose of this paper is to study the Weak Lefschetz Property for the Artinian  Gorenstein standard graded algebras of codimension $3.$  Precisely, we produce Hilbert functions $H$ for which all Artinian  Gorenstein standard graded algebras with Hilbert function $H$ have the Weak Lefschetz Property, see Proposition \ref{hwlp}. The main results of the paper (see Theorem \ref{t1max}, Corollaries \ref{bmax} and \ref{bwlp}) say that for every Hilbert function $H$ there exists a Gorenstein Betti sequence $\beta_0$ compatible with $H,$ such that all Artinian Gorenstein standard graded algebras with Hilbert function is $H,$ whose Betti sequence is larger than or equal to $\beta_0$ have the Weak Lefschetz property. Because of this result we show that for every Gorenstein Betti sequence $\beta$ the generic Artinian algebra in the stratum of $\Gor H$ defined by $\beta$ has the Weak Lefschetz property.

\section{Notation and basic facts} 
\markboth{\it Notation and basic facts}
{\it Notation and basic facts}
Let $k$ be an infinite field and let $R:=k[x_1,x_2,x_3].$ We consider on $R$ the standard grading and we consider in it just homogeneous ideals, which we call simply ideals. 
\par
We will deal with Artinian Gorenstein algebras $A=R/I,$ precisely, Artinian standard graded Gorenstein algebras of codimension $3.$ 
\par
It is well known that $d_1\le\ldots\le d_{2m+1}$ are the degrees of a minimal set of generatora for a Gorenstein ideal of codimension $3$ iff the following Gaeta conditions hold (see \cite{Ga} for the general result and \cite{Di} for the Gorenstein version).
\begin{itemize}
	\item[1)]$\vartheta:=\sum_{i=1}^{2m+1}d_i/m$ is an integer;
	\item[2)]$\vartheta>d_i+d_{2m+3-i}$ for $2\le i\le m+1.$
\end{itemize}
So the resolution of $R/I$ is of the following type
 $$0\to R(-\vartheta)\to\bigoplus_{i=1}^{2m+1}R(d_i-\vartheta)\to\bigoplus_{i=1}^{2m+1}R(-d_i)\to R\to A\to 0.$$
Hence the Betti sequence of $A$ is 
 $$\beta(A)=(d_1,\ldots,d_{2m+1};\vartheta-d_{2m+1},\ldots,\vartheta-d_{1};\vartheta).$$
In the sequel when $A=R/I$ is an Artinian Gorenstein graded algebra, we will denote by $\nu_i(A)$ the number of minimal generators of $I$ belonging to $I_i$ and similarly by $\sigma_i(A)$ the number of minimal first syzygies of degree $i.$
With this notation the previous resolution can be write in the following way
\begin{equation}\label{r2}
 0\to R(-\vartheta)\to\bigoplus_{i}R(-i)^{\sigma_i(A)}\to\bigoplus_{i}R(-i)^{\nu_i(A)}\to R\to A\to 0.
\end{equation}
Note that $\beta(A)$ is uniquely determined by the knowledge of the degrees of minimal generators of $I,$ hence it is determined by the sequence $(\nu_i(A))_i.$ Therefore to give $\beta(A)$ is equivalent to assign the sequence $(\nu_i(A))_i.$
\par
In the sequel of the paper the Hilbert function and the graded Betti sequence of an Artinian standard graded Gorenstein algebra will be briefly called Gorenstein sequence and Gorenstein Betti sequence, respectively. 
\par
We will make frequently use of the following result (see Proposition 3.7 of \cite{RZ1} for the original version).
\begin{prp}\label{p35}
Let $A=R/I$ be an Artinian Gorenstein algebra with Hilbert function $H.$
Let us denote by $\vartheta$ the degree of the only second syzygy of $A,$ 
$s=\min\{i \in {\mathbb N} \ | \ I_i\ne 0 \}.$ Then
\begin{itemize}
\item [1)]$\vartheta=\max\{i \, \mid \, \Delta^{3}H_X(i)\ne 0\}.$
\item [2)]  $\nu_i(A) \ge -\Delta^{3}H(i)$ for $i <\vartheta$;
$\sigma_i(A)\ge \Delta^{3}H(i)$ for all $i.$
\item [3)] $\nu_s (A)=-\Delta^{3}H(s)$, $\nu_i(A) \le -\Delta^2 H(i)$
for $s+1 \le i \le \vartheta -s-1;$ and $\nu_i(A)=0$ for
$i\ge\vartheta-s$; consequently, $\sigma_{\vartheta-s}(A)=
\Delta^{3}H(\vartheta-s)$ and $\sigma_i (A)\le -\Delta^{2}H(i-1)$
for $s+1 \le i\le\vartheta-s-1.$
\end{itemize}
\end{prp}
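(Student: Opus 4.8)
The engine of the whole statement is the Hilbert series identity read off from the minimal free resolution (\ref{r2}). Since the Hilbert series of $R(-j)$ is $t^{j}/(1-t)^{3}$, the alternating sum of the terms of (\ref{r2}) gives
\[
(1-t)^{3}H_{A}(t)=1-\sum_{i}\nu_{i}(A)\,t^{i}+\sum_{i}\sigma_{i}(A)\,t^{i}-t^{\vartheta},
\]
and comparing the coefficient of $t^{i}$ with that of $(1-t)^{3}H_{A}(t)=\sum_{i}\Delta^{3}H(i)\,t^{i}$ (using $H(j)=0$ for $j<0$) yields the basic relation
\[
\Delta^{3}H(i)=\delta_{i,0}-\nu_{i}(A)+\sigma_{i}(A)-\delta_{i,\vartheta}.
\]
From this, 1) and 2) are immediate once one records that all generator degrees and all first-syzygy degrees are $<\vartheta$ (the largest syzygy degree is $\vartheta-s<\vartheta$ as $s\ge 1$), so that $\nu_{i}=\sigma_{i}=0$ for $i\ge\vartheta$. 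Then for $i>\vartheta$ the right-hand side vanishes while at $i=\vartheta$ it equals $-1$, giving 1); and for $i\ge 1$ with $i<\vartheta$ the relation reads $\Delta^{3}H(i)=\sigma_{i}-\nu_{i}$, so nonnegativity of $\nu_{i},\sigma_{i}$ gives $\nu_{i}\ge-\Delta^{3}H(i)$ and $\sigma_{i}\ge\Delta^{3}H(i)$, which is 2).

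For 3) I would first dispose of the equalities. A minimal first syzygy of degree $j$ is a relation $\sum a_{k}g_{k}=0$ with the $a_{k}$ of positive degree among generators of degree $\ge s$, hence forces $j\ge s+1$; thus $\sigma_{j}=0$ for $j\le s$. Substituting $i=s$ into the basic relation then gives $\nu_{s}=-\Delta^{3}H(s)$. By the self-duality of (\ref{r2}) one has $\sigma_{j}=\nu_{\vartheta-j}$; combined with $\sigma_{j}=0$ for $j\le s$ this forces $\nu_{i}=0$ for $i\ge\vartheta-s$, and the basic relation at $i=\vartheta-s$ (where now $\nu_{\vartheta-s}=0$) gives $\sigma_{\vartheta-s}=\Delta^{3}H(\vartheta-s)$.

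The genuine content is the upper bound $\nu_{i}\le-\Delta^{2}H(i)$ for $s+1\le i\le\vartheta-s-1$. Dividing the Hilbert series identity by $(1-t)$ and reading the coefficient of $t^{i}$ (for $i<\vartheta$) converts this inequality into the cumulative form
\[
\sum_{j=0}^{i-1}\nu_{j}(A)\ \ge\ 1+\sum_{j=0}^{i}\sigma_{j}(A).
\]
To prove it I would pass to the truncated ideal $J=(I_{\le i-1})$ generated by the elements of $I$ of degree $\le i-1$. Its minimal generators are exactly those of $I$ in degrees $\le i-1$, so $\mu(J)=\sum_{j<i}\nu_{j}$; and since $J$ agrees with $I$ through degree $i-1$, the minimal first syzygies of $J$ of degree $\le i$ coincide with those of $I$, so their number is $\sum_{j\le i}\sigma_{j}$. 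Now $\operatorname{Syz}_{1}(J)=\ker(R^{\mu(J)}\to R)$ is torsion-free of $\rank\ \mu(J)-1$, so the displayed inequality is exactly the assertion that the minimal first syzygies of $J$ of degree $\le i$ number at most $\rank=\mu(J)-1$, i.e. that no superfluous (rank-deficient) first syzygy occurs in degree $\le i$.

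This last point is where I expect the real work to lie, and it is what the hypothesis $i\le\vartheta-s-1$ is for: the only mechanism by which the minimal first syzygies could exceed the rank is through the second syzygies, which by Gorenstein-ness are concentrated in the single top module $R(-\vartheta)$, and one must show that the resulting superfluous first syzygies are pushed into degree $>i$ throughout the stated range. Granting the bound for the $\nu_{i}$, the companion bound $\sigma_{i}\le-\Delta^{2}H(i-1)$ follows formally: the duality $\sigma_{i}=\nu_{\vartheta-i}$ gives $\sigma_{i}\le-\Delta^{2}H(\vartheta-i)$ on the same range, and the symmetry $H(i)=H(\vartheta-3-i)$ of the Gorenstein Hilbert function propagates to $\Delta^{2}H(\vartheta-i)=\Delta^{2}H(i-1)$, which completes 3).
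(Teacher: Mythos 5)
Your parts 1), 2), and the equalities in 3) are correct: the coefficientwise identity $\Delta^3H(i)=\delta_{i,0}-\nu_i(A)+\sigma_i(A)-\delta_{i,\vartheta}$ coming from the resolution (\ref{r2}), the vanishing $\sigma_j(A)=0$ for $j\le s$, and the self-duality $\sigma_j(A)=\nu_{\vartheta-j}(A)$ give exactly what you claim, and your formal deduction of $\sigma_i(A)\le-\Delta^2H(i-1)$ from the bound on $\nu_i(A)$ (via duality plus the symmetry $\Delta^2H(\vartheta-i)=\Delta^2H(i-1)$) is also fine. Two remarks for context: the paper itself contains no proof of this proposition --- it imports it from Proposition 3.7 of \cite{RZ1} --- and the assertion $\sigma_i(A)\ge\Delta^3H(i)$ \emph{for all} $i$ is in fact false at $i=0$ (there $\sigma_0=0<1=\Delta^3H(0)$), an edge case of the statement that your restriction to $i\ge 1$ silently sidesteps.

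The genuine gap is the inequality $\nu_i(A)\le-\Delta^2H(i)$ for $s+1\le i\le\vartheta-s-1$, which is the only part of the proposition with real content, and which you do not prove: your last paragraph says ``granting the bound for the $\nu_i$.'' Your reformulation --- that the minimal first syzygies of $J=(I_{\le i-1})$ of degree $\le i$ number at most $\rank\operatorname{Syz}(J)=\mu(J)-1$ --- is a correct restatement but no progress, because that claim is false for ideals in general: for $J=(x_1,x_2,x_3)^2$ one has $\mu(J)=6$ and $\rank\operatorname{Syz}(J)=5$, yet there are $8$ minimal first syzygies, all in the least possible degree. So everything rests on the special structure of truncations of codimension-$3$ Gorenstein ideals, which is the original problem; moreover the mechanism you gesture at is misdirected, since $J$ is in general not Gorenstein and its resolution can have length $3$ with many second syzygies, so ``second syzygies concentrated in $R(-\vartheta)$'' is a property of $I$, not of $J$. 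What a complete proof needs is the Buchsbaum--Eisenbud structure theorem in the form of the Gaeta conditions recalled in Section 1: the generator degrees $d_1\le\cdots\le d_{2m+1}$ of $I$ satisfy $d_j+d_{2m+3-j}<\vartheta$ for $2\le j\le m+1$. Granting these, the inequality is purely numerical: by your own cumulative identity and the duality $\sigma_j=\nu_{\vartheta-j}$, it is equivalent to $P(i-1)+P(\vartheta-i-1)\ge 2m+2$, where $P(x):=\#\{j\mid d_j\le x\}$. Setting $x=i-1$ and $y=\vartheta-i-1$ (so $x,y\ge s$ and $x+y=\vartheta-2$), each Gaeta pair $(d_j,d_{2m+3-j})$ contributes at least $2$ to $P(x)+P(y)$: if $d_{2m+3-j}>\max(x,y)$ then $d_j\le x+y+1-d_{2m+3-j}\le\min(x,y)$, which counts twice, while otherwise both members are $\le\max(x,y)$ and each counts at least once; finally $d_1=s\le\min(x,y)$ contributes $2$ (this is exactly where both range hypotheses $i\ge s+1$ and $i\le\vartheta-s-1$ enter). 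This structure-theorem input, absent from your argument, is precisely the content of the result the paper cites from \cite{RZ1}.
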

Let $\varphi:\zz\to\zz$ be a function. We define $\varphi^+:\zz\to\zz$ to be the function such that $\varphi^+(i):=\max\{\varphi(i),0\},$ for every $i.$
\par
Let $H$ be the Hilbert function of an Artinian algebra $A$ of codimension $3$ and let $\vartheta-3$ be the socle degree of $A.$ 
Then $H$ is a Gorenstein sequence of codimension $3$ iff $H(i)=H(\vartheta-3-i),$ for every $i$ and $(\Delta H)^+$ is an $O$-sequence (see Theorem 4.2 in \cite{St1}).

\par

For every Gorenstein sequence $H$ we set 
$$u_H:=\min\{i\mid\Delta H(i)<0\}-\max\{i\mid\Delta H(i)>0\}-1;$$ 
$$s_H:=\min\{i\mid\Delta H(i)<i+1\};$$ $$t_H:=\max\{i\mid\Delta H(i)\ge0\};$$ $$\vartheta_H:=\max\{i\mid H(i)>0\}+3.$$
Since in the sequel the most interesting situations will happen when $u_H=0$ or $u_H=1$ we emphasize that in these cases $t_H+1=\frac{\vartheta_H-1}{2}$ or $t_H+1=\frac{\vartheta_H}{2}$ respectively.

According to Proposition \ref{p35}, we define two Betti sequences $\beta_{\min}(H)$ and $\beta_{\max}(H),$ compatible with $H.$
\par
If $u_H\ne 1$ or $u_H=1$ and $\Delta H(t_H-1)$ is even, then $\beta_{\min}(H)$ is defined by the sequence
$$\nu^{\min}_i(H):=\left\{\begin{array}{ll}
-\Delta^3H(i) & \text{for } i=s_H \\
\max\{0,-\Delta^3H(i)\} & \text{for } s_H+1\le i\le \vartheta_H-s_H-1 \\
0 & \text{otherwise }.
\end{array}\right.$$

If $u_H=1$ and $\Delta H(t_H-1)$ is odd, then $\beta_{\min}(H)$ is defined by the sequence
$$\nu^{\min}_i(H):=\left\{\begin{array}{ll}
-\Delta^3H(i) & \text{for } i=s_H \\
\max\{0,-\Delta^3H(i)\} & \text{for } s_H+1\le i\le \vartheta_H-s_H-1,\,i\ne\frac{\vartheta_H}{2} \\
1 & \text{for }i=\frac{\vartheta_H}{2}\\
0 & \text{otherwise }.
\end{array}\right.$$

$\beta_{\max}(H)$ is defined by the sequence
$$\nu^{\max}_i(H):=\left\{\begin{array}{ll}
-\Delta^2H(i)+1 & \text{for } i=s_H \\
\max\{0,-\Delta^2H(i)\} & \text{otherwise}.
\end{array}\right.$$

Let $\beta_1$ and $\beta_2$ be two Gorenstein Betti sequences, associated respectively to the sequences $(\nu'_i)_i$ and $(\nu''_i)_i.$
We say that $\beta_1\le\beta_2$ iff $\nu'_i\le\nu''_i,$ for every $i.$ 
\par
Note that if $\beta_1\le\beta_2$ according to the previous definition, then using the same notation as in the resolution (\ref{r2}) it will be $\sigma'_i\le\sigma''_i$ for every $i$ and 
$\vartheta'\le\vartheta''.$
\begin{prp}\label{bg}
Let $H$ be a Gorenstein sequence of codimension $3$ and let $\BG_H$ be the poset of all Gorenstein Betti sequences compatible with $H.$
\begin{itemize}
	\item[1)] $\BG_H$ has only one minimal and only one maximal element, respectively $\beta_{\min}(H)$ and $\beta_{\max}(H).$
	\item[2)] If $d_1\le\ldots\le d_{2m+1}$ are the degrees of the minimal generators associated to $\beta_{\max}(H),$ then $m=d_1$ and $d_i+d_{2m+3-i}+1=\vartheta_H,$ for every $2\le i\le m+1.$
	\item[3)] Each element in $\BG_H$ can be obtained by $\beta_{\max}(H)$ by performing couples of cancellations of the type $i,\vartheta_H-i.$ All these cancellations are allowed.
\end{itemize}
\end{prp}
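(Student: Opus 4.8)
The plan is to reduce the statement to a single numerical identity relating the graded Betti numbers of a codimension $3$ Gorenstein algebra to its Hilbert function, and then to combine the inequalities of Proposition \ref{p35} with the realizability of such algebras through the Gaeta conditions. First I would record the basic constraint: for any $A=R/I$ in $\BG_H$ the self-duality of the resolution (\ref{r2}) gives $\sigma_i(A)=\nu_{\vartheta-i}(A)$, while multiplying the Hilbert series of $A$ by $(1-t)^3$ and comparing with the alternating sum of the twists in (\ref{r2}) yields $\nu_i(A)-\sigma_i(A)=-\Delta^3H(i)$ for $0<i<\vartheta$. Since $\vartheta$ is determined by $H$ (Proposition \ref{p35}(1)) and equals $\vartheta_H$, the quantity
$$\nu_i(A)-\nu_{\vartheta_H-i}(A)=-\Delta^3H(i)$$
depends only on $H$. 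Subtracting this identity for $\beta_{\min}(H)$ from the same identity for $\beta(A)$ shows that the excess $c_i:=\nu_i(A)-\nu^{\min}_i(H)$ satisfies $c_i=c_{\vartheta_H-i}$; this symmetry is the source of the couples of cancellations of type $i,\vartheta_H-i$ appearing in part 3.

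For part 1, I would first prove $\beta_{\min}(H)\le\beta(A)\le\beta_{\max}(H)$ for every $A\in\BG_H$. The lower bound is immediate from Proposition \ref{p35}(2),(3): one has $\nu_i(A)\ge\max\{0,-\Delta^3H(i)\}$, with equality forced at $i=s_H$ (note $s=s_H$, because $H$ coincides with the Hilbert function of $R$ in degrees below the initial degree); the only delicate point is the middle term when $u_H=1$ and $\Delta H(t_H-1)$ is odd, where a parity argument forces $\nu_{\vartheta_H/2}(A)\ge 1$. The upper bound follows from Proposition \ref{p35}(3) together with $\nu_i(A)\ge 0$ and the identity $-\Delta^3H(s_H)=-\Delta^2H(s_H)+1$ (which uses $\Delta^2H(s_H-1)=1$). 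Granting the bounds, uniqueness of a minimal and of a maximal element is formal, so it remains to exhibit Gorenstein algebras with Hilbert function $H$ realizing $\beta_{\min}(H)$ and $\beta_{\max}(H)$; this I would obtain from the Buchsbaum--Eisenbud structure theorem once the associated degree sequences are checked against the Gaeta conditions, or by citing \cite{Di} and \cite{RZ1}.

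Part 2 is a direct computation with $\nu^{\max}_i(H)=\max\{0,-\Delta^2H(i)\}$ (augmented by $1$ at $i=s_H$). Summing yields the number $2m+1$ of generators; evaluating at $i=s_H$ and using the Gorenstein antisymmetry of $\Delta H$ (so that $\Delta^2H$ is symmetric and $\Delta H(i)=i+1$ for $i<s_H$) identifies $m$ with $s_H=d_1$. The pairing $d_i+d_{2m+3-i}+1=\vartheta_H$ then drops out of Gaeta condition 1) ($\vartheta_H=\sum d_i/m$) together with this symmetry, and exhibits $\beta_{\max}(H)$ as the sequence saturating Gaeta condition 2) at the extremal value $d_i+d_{2m+3-i}=\vartheta_H-1$.

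Finally, part 3 splits in two. That every $A\in\BG_H$ arises from $\beta_{\max}(H)$ by symmetric cancellations is the first paragraph applied to $\nu^{\max}_i(H)-\nu_i(A)$, which is non-negative by part 1 and symmetric under $i\mapsto\vartheta_H-i$. The harder half---that all such cancellations are allowed---is where I expect the main obstacle: one must realize, by an actual Gorenstein algebra with Hilbert function $H$, every symmetric non-negative reduction of $\beta_{\max}(H)$ that remains $\ge\beta_{\min}(H)$. Here I would argue that an elementary cancellation of a couple $i,\vartheta_H-i$ deletes two generators whose degrees sum to $\vartheta_H$, so that Gaeta condition 1) is automatically preserved (the mean $\vartheta_H$ is unchanged), while Gaeta condition 2) survives because $\beta_{\max}(H)$ meets it with every relevant pair-sum equal to $\vartheta_H-1$ and removing a pair of degrees summing to $\vartheta_H$ only lowers the remaining pair-sums. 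Realizing the resulting degree sequence then reduces to taking the submaximal Pfaffians of a generic skew-symmetric matrix of the prescribed homogeneous degrees (invoking \cite{Di}), the point being that no intermediate numerical sequence between $\beta_{\min}(H)$ and $\beta_{\max}(H)$ is obstructed.
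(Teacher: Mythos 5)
Your proposal is correct in substance, but it is considerably more self-contained than what the paper actually does. For items 1 and 3 the paper simply cites Proposition 3.7 and Remark 3.8 of \cite{RZ1}; only item 2 is proved in the text, and there the paper's computation is exactly the one you sketch: counting $2m+1=\sum_i\max\{0,-\Delta^2H(i)\}+1$ against $\sum_i\Delta^2H(i)=0$ to get $m=d_1$, and then using the first-moment identity $\sum_{i=1}^{2m+1}d_i-\sum_{i=1}^{2m+1}(\vartheta_H-d_i)+\vartheta_H=0$ to force each of the $2d_1$ positive integers $(\vartheta_H-d_i)-d_{2m+3-i}$ to equal $1$. What you reconstruct for the other two items --- the bounds $\beta_{\min}(H)\le\beta(A)\le\beta_{\max}(H)$ from Proposition \ref{p35} together with $\Delta^2H(s_H-1)=1$, the parity argument for the middle degree (the number of generators is odd while each couple $i,\vartheta_H-i$ contributes $2\nu_i(A)+\Delta^3H(i)$, so $\nu_{\vartheta_H/2}(A)\equiv\Delta^2H(t_H)\pmod 2$, which is odd precisely when $u_H=1$ and $\Delta H(t_H-1)$ is odd), and the symmetry of $\nu^{\max}_i(H)-\nu_i(A)$ under $i\mapsto\vartheta_H-i$ coming from self-duality of the resolution --- is precisely the content of the results the paper cites, so your route buys independence from \cite{RZ1} at the price of redoing that work; it also correctly reduces realizability of $\beta_{\min}(H)$ and of all intermediate sequences to the Gaeta conditions plus the Buchsbaum--Eisenbud/Diesel existence theorem.

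One step in your part 3 is stated too casually to count as a proof: the claim that removing a couple of degrees summing to $\vartheta_H$ \emph{only lowers the remaining pair-sums}. This is true but not immediate, because deleting $d_a$ and $d_b$ re-indexes the pairing $i\leftrightarrow 2m+3-i$, and a priori a new pair could join $d_{i+1}$ with $d_{2m+3-i}$, whose index sum $2m+4$ is not controlled by the old pairs. The missing observation is that $d_a+d_b=\vartheta_H$ strictly exceeds every pair-sum of the current sequence, which forces the two deleted entries to sit at antipodal positions, $a+b\ge 2m+4$; this excludes the dangerous configuration, so every new pair has index sum at most $2m+3$ and is therefore bounded above by some old pair-sum, hence by $\vartheta_H-1$. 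Since this property (all pair-sums $\le\vartheta_H-1$, mean equal to $\vartheta_H$, and the middle parity, which each cancellation preserves because a couple at $i=\vartheta_H/2$ removes two generators of that degree) is inherited at every step, the induction over successive cancellations goes through and yields both halves of item 3. With that lemma supplied, your argument is complete.
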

\begin{proof}
\begin{itemize}
	\item[1)] See~\cite{RZ1} Proposition 3.7 and Remark 3.8.
	\item[2)] By definition of $\beta_{\max}(H),$ $2m+1=\sum_i\max\{0,-\Delta^2H(i)\}+1.$ Now
	$$\Delta^2H(i)=\left\{\begin{array}{ll}
 1 & \text{for } 0\le i\le d_1-1 \text{ and }\vartheta_H-d_1\le i\le \vartheta_H-1 \\
 \le 0 & \text{for } d_1\le i\le\vartheta_H-d_1-1.
 \end{array}\right.$$
But $2m=-\sum_{i=d_1}^{\vartheta_H-d_1-1}\Delta^2H(i)=2d_1$ recalling that $\sum_{i}\Delta^2H(i)=0.$
\par
Moreover 
 $$\sum_{i=1}^{2m+1}d_i-\sum_{i=1}^{2m+1}(\vartheta_H-d_i)+\vartheta_H=0$$
from which we get
 $$\sum_{i=2}^{2m+1}[(\vartheta_H-d_i)-d_{2m+3-i}]=2d_1;$$
but the first side consists of the sum of $2d_1$ positive integers, therefore each number must be equal to $1.$
	\item[3)] This follows from Remark 3.8 in \cite{RZ1}.
\end{itemize}
\end{proof}

\section{WLP for Gorenstein algebras} 
\markboth{\it WLP for Gorenstein algebras}
{\it WLP for Gorenstein algebras}
The aim of this paper is to study the Weak-Lefschetz property (WLP) for Artinian standard graded Gorenstein algebras of codimension $3.$
We recall that an Artinian standard graded $S$-algebra $A$ has the WLP if there exists a linear form $l\in S$ such that for every $i>0$ the multiplication map (of $k$-vector spaces) $l:A_{i-1}\to A_i$ has maximal rank.
It is well known that in codimension $\ge 4,$ not all Artinian standard graded Gorenstein algebras have the WLP (see for instance \cite{Ik}) but in codimension $3$ the problem is still wide open.
We will show that given a Gorenstein sequence $H$ (or a Gorenstein Betti sequence $\beta$) of codimension $3$ the WLP holds for the generic Artinian standard graded Gorenstein algebra with Hilbert function $H$ (respectively with Betti sequence $\beta$). Moreover, given again $H$ as before, we will see that, in some sense, the most \lq\lq special\rq\rq\ and the \lq\lq general\rq\rq\ Artinian standard graded Gorenstein algebras have the WLP.

We start with this observation.

\begin{prp}\label{postot}
Let $A=R/I$ be an Artinian standard graded Gorenstein algebra, with socle degree $\vartheta-3$ and Hilbert function $H.$ Then $A$ enjoys the WLP iff there exists a linear form $l\in R$ such that the multiplication map  
$l:A_{t-1}\to A_t$ is injective, where $t=t_H.$
\end{prp}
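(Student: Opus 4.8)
The plan is to combine two ingredients: the self-duality of an Artinian Gorenstein algebra and the precise shape of its Hilbert function. Write $e=\vartheta-3$ for the socle degree, so that $A_e\cong k$ and the multiplication pairing $A_i\times A_{e-i}\to A_e\cong k$ is perfect for every $i$. Under this pairing the map $l\colon A_{i-1}\to A_i$ is the transpose of the map $l\colon A_{e-i}\to A_{e-i+1}$, since $\langle la,b\rangle=\langle a,lb\rangle$ in $A_e$; hence $l\colon A_{i-1}\to A_i$ is injective if and only if $l\colon A_{e-i}\to A_{e-i+1}$ is surjective. I will couple this with the elementary fact that surjectivity propagates upward: if $l\colon A_{i-1}\to A_i$ is onto, then $A_{i+1}=R_1A_i=R_1lA_{i-1}=lA_i$, so $l\colon A_i\to A_{i+1}$ is onto too, and by induction $l\colon A_{j-1}\to A_j$ is surjective for all $j\ge i$.

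First I would record the shape of $H$. Since $H$ is a Gorenstein sequence, $H(i)=H(e-i)$ and $(\Delta H)^+$ is an $O$-sequence; by Macaulay's theorem an $O$-sequence has no internal zeros, so $\{i\mid\Delta H(i)>0\}$ is an initial segment $[0,a]$. The antisymmetry $\Delta H(i)=-\Delta H(e+1-i)$ then forces $\Delta H\equiv 0$ on the central block $[a+1,e-a]$ and $\Delta H<0$ on $[e-a+1,e]$. Consequently $\Delta H(i)\ge 0$ holds precisely on $[0,t]$ with $t=t_H=e-a$, while $\Delta H(i)<0$ for $t<i\le e$, and in particular $2t\ge e$. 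The forward implication is now immediate: if $A$ has the WLP, some $l$ gives maximal rank in every positive degree, in particular $l\colon A_{t-1}\to A_t$; since $\Delta H(t)\ge 0$ means $\dim_k A_{t-1}\le\dim_k A_t$, maximal rank here is exactly injectivity.

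For the converse I would fix a linear form $l$ with $l\colon A_{t-1}\to A_t$ injective and show this same $l$ witnesses the WLP. By duality this injectivity is equivalent to surjectivity of $l\colon A_{e-t}\to A_{e-t+1}$, and by upward propagation $l\colon A_{j-1}\to A_j$ is then surjective for every $j\ge e-t+1$. Dualizing back, surjectivity in all degrees $\ge e-t+1$ is equivalent to injectivity in all degrees $\le t$. It remains to check that these two conclusions deliver maximal rank everywhere: in degrees $i\le t$ we have $\Delta H(i)\ge 0$, so the established injectivity is maximal rank; in degrees $t<i\le e$ we have $\Delta H(i)<0$, so the established surjectivity (available since $i\ge t+1\ge e-t+1$) is maximal rank; and because $2t\ge e$ we have $e-t+1\le t+1$, so the ranges $[1,t]$ and $[e-t+1,e]$ already cover $[1,e]$, degrees above $e$ being trivial. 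Hence $l$ gives maximal rank in every degree and $A$ has the WLP.

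The conceptual core, and the only genuinely Gorenstein input, is the duality identification that turns injectivity into surjectivity; everything else is the free upward propagation of surjectivity plus bookkeeping on dimensions. The step I expect to demand the most care is the structural lemma on $\Delta H$—specifically, excluding an internal dip of $\Delta H$ below zero before degree $t$—which is exactly where the $O$-sequence (Macaulay) property and the symmetry of $H$ combine to pin down the flat central block, and thereby guarantee both $\Delta H\ge 0$ on all of $[0,t]$ and the inequality $2t\ge e$ used for coverage.
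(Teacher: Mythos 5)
Your proof is correct and takes essentially the same route as the paper: the paper quotes Proposition 2.1(b) of \cite{MMN} to propagate injectivity downward from degree $t$ and then invokes Gorenstein duality together with $t\ge\frac{\vartheta-3}{2}$ to get surjectivity in all higher degrees. Your argument simply unpacks that cited machinery in a self-contained way --- duality turning injectivity into surjectivity, the elementary upward propagation $A_{i+1}=R_1A_i$, and the $O$-sequence/symmetry argument establishing $2t_H\ge\vartheta-3$, a fact the paper asserts without proof --- so the mathematical content coincides.
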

\begin{proof}
By hypothesis and Proposition 2.1, b) in \cite{MMN}, we get that the multiplication map $l:A_{i-1}\to A_i$ is injective for $i\le t.$ Now since $t\ge\frac{\vartheta-3}{2},$ by Gorenstein duality we deduce that the multiplication map $l:A_{i-1}\to A_i$ is surjective for $i>t,$ so it has maximal rank in any degree.
\end{proof}

Next result will be very useful in the rest of the paper.

\begin{lem}\label{fj}
Let $A=R/I$ be an Artinian standard graded algebra.
Let us suppose that there exists an integer $t,$ a perfect ideal $J\subset R$ of height $\le 2$ and a form $f\in R$ (eventually a constant) such that $I_i=(fJ)_i$ for $i=t-1,t.$
Then there exists a linear form $l\in R$ such that the multiplication map $l:A_{t-1}\to A_t$ is injective.
\end{lem}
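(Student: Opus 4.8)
The plan is to reduce everything to the algebra $R/fJ$ and then to split it via a short exact sequence on which the snake lemma does the work. First, since $I_i=(fJ)_i$ for $i=t-1,t$, the $k$-linear map $l:A_{t-1}\to A_t$ is literally the same as the multiplication map $l:(R/fJ)_{t-1}\to(R/fJ)_t$, because multiplication by $l$ carries $R_{t-1}\to R_t$ and descends identically to both quotients. Hence it suffices to find a linear form $l$ making $l:(R/fJ)_{t-1}\to(R/fJ)_t$ injective, and we may work with $C:=R/fJ$. If $f=0$ or $f$ is a nonzero constant the claim is immediate (then $C=R$, a domain, or $C=R/J$, treated below), so assume $f$ is a nonconstant form and set $d:=\deg f$.

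Because $R$ is a domain, $f$ is a nonzerodivisor, so multiplication by $f$ identifies $R/J$ with the submodule $fR/fJ\subset R/fJ$, shifted in degree by $d$. This gives the short exact sequence of graded $R$-modules
$$0\to (R/J)(-d)\xrightarrow{\ \cdot f\ } R/fJ\to R/(f)\to 0.$$
I would then apply $l$ to this sequence in the two degrees $t-1$ and $t$, obtaining a ladder whose rows are the above sequence (restricted to the relevant degrees) and whose vertical arrows are multiplication by $l$. By the snake lemma the sequence of kernels
$$0\to \ker\big(l:(R/J)_{t-1-d}\to(R/J)_{t-d}\big)\to \ker\big(l:(R/fJ)_{t-1}\to(R/fJ)_t\big)\to \ker\big(l:(R/(f))_{t-1}\to(R/(f))_t\big)$$
is exact, so the middle map is injective as soon as the two outer multiplication maps are injective.

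Finally I would observe that both outer modules are Cohen--Macaulay of positive dimension: since $J$ is perfect of height $\le 2$, the quotient $R/J$ is Cohen--Macaulay of dimension $\ge 1$, while $R/(f)$ is a hypersurface ring of dimension $2$. In either case every associated prime has dimension $\ge 1$, so it does not contain a general linear form; as $k$ is infinite, a general $l\in R_1$ therefore avoids all associated primes of $R/J$ and of $R/(f)$ simultaneously and is thus a nonzerodivisor on both. For such an $l$, multiplication by $l$ is injective in every degree on each outer module, and by the previous paragraph $l:(R/fJ)_{t-1}\to(R/fJ)_t$, equivalently $l:A_{t-1}\to A_t$, is injective.

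The two delicate points are the identification producing the short exact sequence, which rests precisely on $R$ being a domain so that $f$ is a nonzerodivisor, and the realization that the snake lemma needs only \emph{injectivity} (not full maximal rank) of the outer vertical maps; the Cohen--Macaulayness and positive dimension of $R/J$ and $R/(f)$ are exactly what let a single general linear form serve both ends at once.
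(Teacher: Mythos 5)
Your proof is correct and takes essentially the same route as the paper: the paper's two-line proof likewise reduces to choosing $l$ that is not a factor of $f$ and is regular on the Cohen--Macaulay ring $R/J$ (of Krull dimension $\ge 1$), which is exactly your condition that a general $l$ be a nonzerodivisor on both $R/(f)$ and $R/J$. The only difference is that the paper leaves the verification that such an $l$ works implicit, whereas you make it explicit via the short exact sequence $0\to (R/J)(-d)\xrightarrow{\cdot f} R/fJ\to R/(f)\to 0$ and left-exactness of kernels --- a clean way of filling in the omitted step.
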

\begin{proof}
The ring $R/J$ is Cohen-Macaulay of Krull dimension $\ge 1.$ It is enough to choose a linear form in $R$ such that $l$ is not a factor of $f$ and it is regular in $R/J.$
\end{proof}

We collect in the next proposition some cases in which the Hilbert function $H$ forces the WLP for every Artinian standard graded Gorenstein algebras having $H$ as Hilbert function.

\begin{prp}\label{hwlp}
Let $H$ be a Gorenstein sequence of codimension $3$ and let $u=u_H$ and $t=t_H$ as in the previous section. 
If $H$ satisfies one of the following conditions
\begin{itemize}
	\item[1)] $\Delta H(t-1)=\Delta H(t);$
	\item[2)] $\Delta H(t-1)=1$ and $\Delta H(t)=0;$
	\item[3)] $u=0$ and $\Delta H(t)\ge t-1;$	
\end{itemize}
then every Artinian standard graded Gorenstein algebra with Hilbert function $H$ has the WLP.
\end{prp}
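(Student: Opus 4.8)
The plan is to reduce the statement, in all three cases, to the purely structural criterion furnished by Proposition \ref{postot} and Lemma \ref{fj}. By Proposition \ref{postot} it suffices to produce a linear form $l$ for which $l\colon A_{t-1}\to A_t$ is injective, and by Lemma \ref{fj} it is enough to exhibit a perfect ideal $J\subset R$ of height $\le 2$ and a form $f$ such that $I_i=(fJ)_i$ for $i=t-1,t$. Thus the whole problem becomes the following: understand the forms of $I$ in the two degrees $t-1$ and $t$ and recognize them as coming from a codimension $\le 2$ structure, after removing a common factor. Observe that, since $\hgt fJ\le 2$, such an agreement can only hold in this narrow degree window, not globally, which is exactly what the criterion asks for.

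To build $f$ and $J$ I would proceed uniformly. Let $f$ be the $\GCD$ of the (finitely many) forms of $I$ of degree $\le t$, and write the ideal $I_{\le t}$ they generate as $f\cdot I'$, so that the generators of $I'$ have trivial $\GCD$ and hence $\hgt I'\ge 2$. Since $(I_{\le t})_i=I_i$ for $i\le t$ by construction, one gets $(fI')_i=I_i$ for $i=t-1,t$ automatically. Everything then hinges on two claims: (a) $\hgt I'=2$, i.e. the de-$\GCD$'d forms still have a common zero in $\mathbb P^2$; and (b) the height-$2$ unmixed part $J$ of $I'$ is perfect and agrees with $I'$ in degrees $t-1-\deg f$ and $t-\deg f$, so that the Cohen--Macaulay ideal $J$ (which is determinantal by Hilbert--Burch) may legitimately replace $I'$ in Lemma \ref{fj}. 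Granting (a) and (b), the WLP follows.

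The three hypotheses on $H$ enter precisely to secure claim (a), through the bounds of Proposition \ref{p35} on $\nu_i(A)$ and $\sigma_i(A)$. In case 1) one has $\Delta^2H(t)=\Delta H(t)-\Delta H(t-1)=0$, whence $\nu_t(A)\le -\Delta^2H(t)=0$: there are no minimal generators in degree $t$, so $I_t=R_1\,I_{t-1}$ and the degree-$t$ part is forced to factor through the curve already present in degree $t-1$. In case 2), $\Delta^2H(t)=-1$ gives $\nu_t(A)\le 1$, while $\Delta H(t-1)=1$ pins down $\dim I_{t-1}$, and one shows that the single possible new generator cannot raise the height to $3$. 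In case 3), where $u=0$ forces $t=\frac{\vartheta-3}{2}$ and symmetry puts the peak of $H$ at $t$, the near-maximal growth $\Delta H(t)\ge t-1$ means the forms of $I$ of degree $\le t$ are too few and of too low degree to cut out a zero-dimensional scheme; a Macaulay-type count then yields $\hgt(I_{\le t})\le 2$ directly, typically with $f$ a non-constant common factor.

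The main obstacle is claim (a), namely ruling out $\hgt I'=3$. The vanishing or smallness of $\Delta^2H(t)$ in cases 1) and 2), and the near-maximality of $\Delta H(t)$ in case 3), are exactly the numerical inputs that prevent the low-degree forms from being an almost complete intersection cutting out points; but translating each hypothesis into the height bound requires careful Hilbert-function bookkeeping and a separate treatment of the subcases $f$ constant versus $f$ non-constant. A secondary, more technical point is claim (b): one must check that passing from $I'$ to its Cohen--Macaulay height-$2$ part leaves the two degrees $t-1-\deg f$ and $t-\deg f$ unchanged. I expect this because these degrees are large relative to the generators of $I'$, so any embedded component is already invisible there, but it has to be verified in each case.
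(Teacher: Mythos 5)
Your reduction to Proposition \ref{postot} and Lemma \ref{fj} is exactly the paper's strategy, and your case 3 is recoverable along the paper's lines (the $O$-sequence hypothesis forces $I_{\le t}$ to have at most \emph{two} minimal generators, after which $I_{\le t}=fJ$ with $J$ a length-two regular sequence is immediate; note that you need the generator count, not merely $\hgt I_{\le t}\le 2$, since with three or more generators you would be thrown back onto your claim (b)). The genuine gap is in cases 1 and 2: there your proposal defers the entire mathematical content to claims (a) and (b), and neither is provable by the Hilbert-function bookkeeping you envisage. The paper does not obtain the factorization by taking a $\GCD$ and passing to an unmixed part; it invokes the structure theory of height-$3$ Gorenstein ideals. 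In case 1 it quotes Theorem 3.1 and Corollary 3.4 of \cite{RZ1}, which say that $\Delta H(t-1)=\Delta H(t)$ forces $I_{\le t}=fJ$ with $\deg f=\Delta H(t)$ and $J$ perfect of height $\le 2$; in case 2 it counts low-degree generators against low-degree syzygies ($p-q=1$) and then analyzes the Buchsbaum--Eisenbud alternating matrix \cite{BE} to conclude that $I_{\le t}$ is itself perfect of height $2$. Your case-1 argument is circular: ``the degree-$t$ part is forced to factor through the curve already present in degree $t-1$'' presupposes $\hgt I_{\le t-1}\le 2$, which is precisely what must be proved (a priori $I$ could contain three forms of degree $\le t-1$ forming a regular sequence, and Proposition \ref{p35} alone does not exclude this). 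Your case-2 argument simply asserts that the one possible generator in degree $t$ ``cannot raise the height to $3$,'' with no mechanism offered.

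Claim (b) is moreover unsound as formulated. First, the degrees in question are \emph{not} large relative to the generators of $I'$: by construction $I'$ has minimal generators in degrees up to $t-\deg f$, which is exactly the top degree where you need $I'$ to agree with its unmixed part $J$. Second, the discrepancy between an ideal and its saturation is governed by regularity, not by generator degrees: for instance $I'=(x^2,xy,y^2,xz^n)$ has height $2$, its saturation is $(x,y^2)$, and the two differ in every degree $\le n$, i.e.\ right up to one less than the largest generator degree. Ruling out this kind of behaviour for the low-degree part of a codimension-$3$ Gorenstein ideal is exactly the content of the pfaffian structure theorem used in \cite{RZ1}; without that input (or some substitute for it), your outline does not close in cases 1 and 2.
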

\begin{proof}
\begin{itemize}
\item[1)] Note that for Theorem 3.1 and Corollary 3.4 in \cite{RZ1}, every Artinian standard graded Gorenstein algebra $A=R/I$ with Hilbert function $H$ is such that 
$I_{\le t}=fJ,$ where $f$ is a form of degree $\Delta H(t)$ and $J$ is perfect ideal of height $\le 2.$ So, applying Lemma \ref{fj}, we are done.
\item[2)] In this case $\sum_{i=0}^t\Delta^3H(i)=\Delta^2H(t)=-1.$ Using the same arguments of proof of Theorem 3.1 in \cite{RZ1}, if we set $p$ the number of the minimal generators of $I$ of degree $\le t$ and $q$ the number of the minimal first syzygies of $I$ of degree $\le t+1,$ we get that $p-q=1.$ Taking a look at the Buchsbaum-Eisenbud matrix (see \cite{BE}) of $A,$ again as in the proof of Theorem 3.1 in \cite{RZ1}, we get that $I_{\le t}$ is a perfect ideal of height $2,$ so by Lemma \ref{fj} and Proposition \ref{postot}, $A$ enjoys the WLP.
\item[3)] Since $(\Delta H)^+$ is an $O$-sequence, in this case the ideal $I_{\le t}$ is minimally generated by at most two elements. Therefore $I_{\le t}=0$ or $I_{\le t}$ is principal or 
$I_{\le t}=fJ$ where $J$ is generated by a regular sequence of length $2.$ By Lemma \ref{fj} the conclusion follows.
\end{itemize}
\end{proof}
\begin{rem}\label{u2}
Note that if $u_H\ge 2,$ then the hypothesis $1$ of Proposition \ref{hwlp} is satisfied. Therefore, since when $u_H\ge 2$ the WLP holds, in the sequel we will treat only the cases $u_H=0,1.$
\end{rem}
Now we concentrate on the relationship between the WLP and the Betti sequences of the Artinian standard graded Gorenstein algebras.
\par
One key result is the following theorem.

\begin{thm}\label{t1max}
Let $A=R/I$ be an Artinian standard graded Gorenstein algebra. Let $H$ be the Hilbert function of $A$ and let $t=t_H.$ If $\nu_{t+1}(A)=\nu^{\max}_{t+1}(H)$ then $A$ enjoys the WLP.
\end{thm}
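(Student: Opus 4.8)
The plan is to reduce, via Proposition \ref{postot} and Lemma \ref{fj}, to proving that the truncated ideal $I_{\le t}$ generated by the minimal generators of $I$ of degree $\le t$ has the form $fJ$ with $J$ a perfect ideal of height $\le 2$; then a generic linear form makes $\times l\colon A_{t-1}\to A_t$ injective, and the WLP follows. By Remark \ref{u2} I may assume $u_H\in\{0,1\}$, and I may assume $s_H\le t$, since otherwise $I_{t-1}=I_t=0$ and the required injectivity is immediate.

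First I would translate the hypothesis into a numerical statement about the syzygies. Using $\sigma_i-\nu_i=\Delta^3 H(i)$ for $0<i<\vartheta$, the assumption $\nu_{t+1}(A)=\nu^{\max}_{t+1}(H)=-\Delta^2 H(t+1)$ gives at once
$$\sigma_{t+1}(A)=\nu_{t+1}(A)+\Delta^3 H(t+1)=-\Delta^2 H(t+1)+\Delta^3 H(t+1)=-\Delta^2 H(t),$$
so that, by Proposition \ref{p35}, the number of first syzygies in degree $t+1$ is also maximal. Setting $p:=\sum_{i\le t}\nu_i(A)$ and $q:=\sum_{i\le t+1}\sigma_i(A)$ and telescoping $\sum_{0<i\le t}\Delta^3 H(i)=\Delta^2 H(t)-1$, this maximality yields the key equality $p-q=1$.

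Next I would exploit the Buchsbaum--Eisenbud skew-symmetric matrix $M$ presenting $I$, whose $(i,j)$ entry has degree $\vartheta-d_i-d_j$ and hence, by minimality, vanishes whenever $d_i+d_j\ge\vartheta$. Ordering the generators as low ($d_i\le t$), middle ($d_i=t+1$) and high ($d_i\ge t+2$), this forces the high--high and high--middle blocks to vanish, together with the middle--middle block when $u_H=1$. Consequently the generators of degree $>t$ pair only with the low ones, and the equality $p-q=1$ makes the corresponding off-diagonal block an $a\times(a-1)$ matrix, where $a$ is the number of low generators. Expanding each Pfaffian $\pf_i$ with $i$ low along the perfect matchings that the block shape forces identifies it, up to sign, with a maximal minor of that block; when $u_H=0$ the non-vanishing middle block $M_{KK}$ splits off as a common factor $f=\pf(M_{KK})$. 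Either way one obtains $I_{\le t}=fJ$ with $J$ perfect of height $2$ by Hilbert--Burch, and Lemma \ref{fj} together with Proposition \ref{postot} yields the WLP.

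The delicate point is the Pfaffian computation when $u_H=0$: there $M_{KK}$ has linear entries and does not vanish, so one must check that the combinatorics of perfect matchings force every low Pfaffian to carry the common factor $\pf(M_{KK})$ and to collapse to a single maximal minor of the low--high block $M_{LG}$. This works precisely because the block sizes match exactly, which is where $p-q=1$ enters. A secondary technical matter is confirming the grade-$2$ condition needed for Hilbert--Burch; this holds since the maximal minors of a matrix with one more row than column have height at most $2$ and are nonzero here, the $\pf_i$ being minimal generators of $I$.
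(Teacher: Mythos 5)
Your route is genuinely different from the paper's, and in outline it is sound. The paper splits into two cases: if $\nu_{t+1}(A)>\nu^{\min}_{t+1}(H)$ it simply cites Theorem 4.4 of \cite{RZ2} for the factorization $I_{\le t}=fJ$ with $J$ perfect of height $\le 2$; if $\nu_{t+1}(A)=\nu^{\min}_{t+1}(H)$, then together with the hypothesis this forces $\nu^{\min}_{t+1}(H)=\nu^{\max}_{t+1}(H)$, which translates into a numerical condition on $H$ ($\Delta H(t-1)=\Delta H(t)$ when $u_H=0$, $\Delta H(t-1)=1$ when $u_H=1$) handled by Proposition \ref{hwlp}. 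You instead try to prove the factorization in all cases at once, self-containedly, by analyzing the Buchsbaum--Eisenbud matrix. Your preliminary reductions, the numerical step ($\sigma_{t+1}(A)=-\Delta^2H(t)$ and hence $p-q=1$), the vanishing of the high--high, high--middle and (for $u_H=1$) middle--middle blocks, and the Pfaffian factorizations ($\pf_i$ equal, up to sign, to a maximal minor of the relevant $a\times(a-1)$ block, times $\pf(M_{KK})$ when $u_H=0$) are all correct; this is in effect an attempted reproof of the structural results the paper cites.

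The gap is in your final step, and it sits exactly where the real difficulty of those cited theorems lies. From ``the maximal minors of a matrix with one more row than column have height at most $2$ and are nonzero'' you obtain only $1\le\hgt I_{a-1}(N)\le 2$; Hilbert--Burch gives perfection only when the grade is exactly $2$, i.e.\ only when the maximal minors have no common factor. If they do share a common factor $g$ of positive degree (a case you cannot exclude a priori: it is precisely the situation the factor $f$ in Lemma \ref{fj} exists for, and common factors in $I_{\le t}$ do occur for Gorenstein ideals), then $J$ must be taken to be the ideal of the minors divided by $g$, and for that ideal neither the Eagon--Northcott height bound nor perfection follows from general matrix theory. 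Concretely, for the matrix with columns $(y,x,0)^T$ and $(z,0,-x)^T$ the maximal minors are $x^2,xy,xz$; dividing by the $\GCD$ gives $(x,y,z)$, of height $3$. Were this to happen here, $R/J$ would be Artinian, no linear form would be regular on it, Lemma \ref{fj} would be inapplicable, and injectivity of $\times l\colon A_{t-1}\to A_t$ can genuinely fail for such truncations. Ruling out this degeneration requires the Gorenstein structure of $I$ (duality/socle constraints plus careful degree bookkeeping), and that is the substantive content of Theorem 3.1 of \cite{RZ1} and Theorem 4.4 of \cite{RZ2} which the paper invokes and which your argument silently assumes.
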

\begin{proof}
By Remark \ref{u2} we can suppose that $u:=u_H\le 1.$
Let $s:=s_H$ and $\vartheta:=\vartheta_H.$ Note that $s-1\le t\le\vartheta-s-2$ and when $t=s-1$ we have that $A_{i}=R_{i}$ for $i\le t,$ so the WLP is trivial by Proposition \ref{postot}.
So we can assume $s+1\le t+1\le\vartheta-s-1.$ If $\nu_{t+1}(A)>\nu^{\min}_{t+1}(H),$ applying Theorem 4.4 in \cite{RZ2}, we get that $I_{\le t}=fJ$ where $f$ is a form and $J$ is perfect of height $\le 2.$ So the hypotheses of Lemma \ref{fj} are satisfied and by Proposition \ref{postot} $A$ has the WLP. Let us suppose now that $\nu_{t+1}(A)=\nu^{\min}_{t+1}(H).$ 
If $u=0$ then
 $$\nu^{\min}_{t+1}(H)=\max\{0,-\Delta^3H(t+1)\}=\max\{0,-\Delta^2H(t+1)\}=-\Delta^2H(t+1)>0$$
i.e. $\Delta^3H(t+1)=\Delta^2H(t+1);$ this implies that $\Delta H(t-1)=\Delta H(t).$ So by Proposition \ref{hwlp} 1) we get that $A$ has the WLP.
\par
If $u=1$ the assumption $\nu_{t+1}(A)=\nu^{\min}_{t+1}(H)$ can happen only if $\Delta H(t-1)=1,$ so the conclusion follows by Proposition \ref{hwlp} item $2.$
\end{proof}

\begin{cor}\label{bmax}
Let $A=R/I$ be an Artinian standard graded Gorenstein algebra. Let $\beta$ be the Betti sequence of $A$ and $H$ be the Hilbert function of $A.$ If $\beta=\beta_{\max}(H)$ then $A$ enjoys the WLP. In particular for every Gorenstein sequence $H$ of codimension $3,$ there is an Artinian standard graded Gorenstein algebra whose Hilbert function is $H$ and having the WLP.
\end{cor}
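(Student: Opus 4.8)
The plan is to derive this entirely from Theorem \ref{t1max}, so the corollary is essentially a repackaging of that result. For the first assertion, suppose $\beta=\beta_{\max}(H)$. Since the Betti sequence of $A$ is determined by the data $(\nu_i(A))_i$, equality of Betti sequences means $\nu_i(A)=\nu^{\max}_i(H)$ for every index $i$; in particular this holds at $i=t+1$ with $t=t_H$. Thus the single hypothesis $\nu_{t+1}(A)=\nu^{\max}_{t+1}(H)$ required in Theorem \ref{t1max} is automatically satisfied, and that theorem immediately yields the WLP for $A$. So the first statement reduces to a one-line specialization of the main theorem, with no additional computation.

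For the ``in particular'' clause I would invoke the existence half of Proposition \ref{bg}. Part 1) of that proposition asserts that $\BG_H$, the poset of all Gorenstein Betti sequences compatible with $H$, has $\beta_{\max}(H)$ as its unique maximal element. Since membership in $\BG_H$ records precisely that the sequence is realized by some Artinian standard graded Gorenstein algebra of Hilbert function $H$, there is an algebra $A$ with Hilbert function $H$ and $\beta(A)=\beta_{\max}(H)$. Applying the first part of the corollary to this $A$ shows that it enjoys the WLP, establishing the existence claim for every Gorenstein sequence $H$ of codimension $3$.

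I do not expect a genuine obstacle here, since all of the substantive content already lives in Theorem \ref{t1max} and in the structural description of $\BG_H$. The only point deserving care is the interpretation of ``compatible'' in Proposition \ref{bg}: one must be sure that $\beta_{\max}(H)\in\BG_H$ carries an actual realizability statement (an algebra achieving that Betti table) and not merely numerical admissibility. Granting that reading, which is how $\BG_H$ is set up through Proposition \ref{p35} and the Gaeta conditions, both assertions follow at once.
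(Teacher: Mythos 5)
Your proposal is correct and matches the paper's approach: the paper's proof is the one-line observation that this is a particular case of Theorem \ref{t1max}, which is exactly your first paragraph, and your use of Proposition \ref{bg} to get realizability of $\beta_{\max}(H)$ just makes explicit what the paper leaves implicit for the ``in particular'' clause.
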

\begin{proof}
This is just a particular case of Theorem \ref{t1max}.
\end{proof}

\begin{cor}\label{bwlp}
Let $H$ be a Gorenstein sequence of codimension $3.$ Then there exists a Gorenstein Betti sequence $\beta_0$ compatible with $H$ such that for every Gorenstein Betti sequence $\beta\in\BG_H,$ $\beta\ge\beta_0$ all Artinian standard graded Gorenstein algebras having Betti sequence $\beta$ enjoy the WLP.
\end{cor}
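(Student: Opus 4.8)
The plan is to reduce the statement to the single-degree WLP criterion of Theorem~\ref{t1max} and then exploit the lattice structure of $\BG_H$ furnished by Proposition~\ref{bg}. Recall that Theorem~\ref{t1max} guarantees the WLP for every Artinian standard graded Gorenstein algebra $A$ with Hilbert function $H$ as soon as $\nu_{t+1}(A)=\nu^{\max}_{t+1}(H)$, where $t=t_H$. Since the Betti sequence $\beta$ of $A$ determines the whole sequence $(\nu_i(A))_i$, the WLP holds for \emph{all} algebras with a fixed Betti sequence $\beta$ whenever the value of $\beta$ in degree $t+1$ equals $\nu^{\max}_{t+1}(H)$. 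Thus it suffices to produce some $\beta_0\in\BG_H$ such that every $\beta\ge\beta_0$ has this property in degree $t+1$.

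First I would record that $\nu_{t+1}$ is maximized exactly at $\beta_{\max}(H)$: by Proposition~\ref{bg}(1) the sequence $\beta_{\max}(H)$ is the unique maximal element of $\BG_H$, so every $\beta\in\BG_H$ satisfies $\nu_{t+1}(\beta)\le\nu^{\max}_{t+1}(H)$. Consequently the set $F:=\{\beta\in\BG_H\mid\nu_{t+1}(\beta)=\nu^{\max}_{t+1}(H)\}$ is upward closed: if $\beta\in F$ and $\beta'\ge\beta$, then $\nu^{\max}_{t+1}(H)=\nu_{t+1}(\beta)\le\nu_{t+1}(\beta')\le\nu^{\max}_{t+1}(H)$ forces $\beta'\in F$. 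By Theorem~\ref{t1max} every element of $F$ is a Betti sequence forcing the WLP, so the corollary follows once $F$ is shown to be a principal filter, i.e. to admit a minimal element $\beta_0$.

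To construct $\beta_0$ I would invoke Proposition~\ref{bg}(3): every element of $\BG_H$ is obtained from $\beta_{\max}(H)$ by independently performing couples of cancellations of type $(i,\vartheta_H-i)$, all of which are allowed. Such a cancellation only decreases $\nu_i$ and $\nu_{\vartheta_H-i}$, hence it leaves $\nu_{t+1}$ untouched precisely when $t+1\notin\{i,\vartheta_H-i\}$. I would therefore define $\beta_0$ as the sequence obtained from $\beta_{\max}(H)$ by performing, to the maximal allowed extent, every couple of cancellations \emph{except} the one involving the degree $t+1$. Independence of the cancellations makes $\beta_0$ a well-defined element of $\BG_H$ with $\nu_{t+1}(\beta_0)=\nu^{\max}_{t+1}(H)$, so $\beta_0\in F$; moreover any $\beta\in F$ must omit the degree-$(t+1)$ cancellation entirely (otherwise $\nu_{t+1}$ would drop below its maximum) while possibly omitting some others, so $\beta_0\le\beta$. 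Hence $\beta_0=\min F$ and $F=\{\beta\in\BG_H\mid\beta\ge\beta_0\}$, which is exactly the assertion.

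The only genuinely routine points are the monotonicity of $\nu_{t+1}$ along $\le$ and the remark that $\nu^{\max}_{t+1}(H)>0$, which follows from $\Delta H(t)\ge 0>\Delta H(t+1)$ and hence $-\Delta^2H(t+1)>0$, so the construction is not vacuous. The step I expect to be the main obstacle is the $u_H=1$ case, where $\vartheta_H=2(t+1)$ and the degree $t+1=\vartheta_H/2$ is self-dual: there the excluded couple $(t+1,\vartheta_H-t-1)$ collapses to a single self-dual degree, and one must verify, against Proposition~\ref{bg}(3) and the definition of $\beta_{\min}(H)$ (whose middle value may be forced to be $1$ by parity), that omitting this cancellation still produces a legitimate Gorenstein Betti sequence attaining $\nu^{\max}_{t+1}(H)$ in the middle degree. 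Once this compatibility is checked, the argument closes the proof in both the $u_H=0$ and $u_H=1$ cases, the case $u_H\ge 2$ being already covered by Remark~\ref{u2}.
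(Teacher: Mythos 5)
Your proposal is correct and takes essentially the same approach as the paper: the paper's $\beta_0$ is defined by setting $\nu_i$ equal to $\nu^{\max}_i(H)$ in the degrees $t_H+1$ and $\vartheta_H-t_H-1$ and to $\nu^{\min}_i(H)$ elsewhere --- exactly the sequence you build by performing all cancellations except the couple involving degree $t_H+1$ --- with admissibility justified by Proposition~\ref{bg}, item 3, and the WLP conclusion by Theorem~\ref{t1max}. Your further observation that this $\beta_0$ is actually the minimum of the filter $F=\{\beta\in\BG_H\mid\nu_{t+1}(\beta)=\nu^{\max}_{t+1}(H)\}$ is a correct strengthening that the paper does not state, but it changes nothing in the substance of the argument.
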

\begin{proof}
Let us consider the sequence $\nu_0$ defined as follows
$$(\nu_0)_i:=\left\{\begin{array}{ll}
\nu^{\max}_i(H) & \text{for } i=t_H+1,\vartheta_H-t_H-1 \\
\nu^{\min}_i(H) & \text{otherwise }
\end{array}.\right.$$
Let $\beta_0$ be the Betti sequence corresponding to $\nu_0.$ Note that $\beta_0$ is admissible for an Artinian standard graded Gorenstein algebra by Proposition \ref{bg}, item $3.$ So if $A$ is an 
Artinian standard graded Gorenstein algebra such that $\beta(A)\ge\beta_0$ then it has the WLP by Theorem \ref{t1max}.
\end{proof}

\begin{exm}
Let us consider the following Gorenstein sequence
\begin{multline*}
 H=(1,3,6,10,15,21,28,36,45,55,66,76,84,89,90, \\ 89,84,76,66,55,45,36,28,21,15,10,6,3,1).
\end{multline*}
Then $\beta^{\max}(H)$ is defined by the sequence
\begin{multline*}
 \nu_{11}=2,\,\nu_{12}=2,\,\nu_{13}=3,\,\nu_{14}=4,\,\nu_{15}=2,\,\nu_{16}=4,\,\\ \nu_{17}=3,\,\nu_{18}=2,\,\nu_{19}=1,
\end{multline*}
and $\beta_0$ is defined by the sequence
\begin{equation*}
 \nu_{11}=2,\,\nu_{12}=1,\,\nu_{13}=1,\,\nu_{14}=1,\,\nu_{15}=2,\,\nu_{16}=4.
\end{equation*}
\end{exm}

\begin{rem}
Note that if $k$ is algebraically closed by the irreducibility of $\Gor H,$ the previous results imply that the generic element in $\Gor H$ has the WLP.
\end{rem}

\begin{thm}\label{canc}
Let $A=R/I$ be an Artinian standard graded Gorenstein algebra, enjoying the WLP, whose Hilbert function is $H$ and Betti sequence is $\beta.$ Let $\beta'\in\BG_H,$ $\beta'<\beta$ such that $]\beta',\beta[=\emptyset.$ Then there exists a $1$-dimensional family of Artinian standard graded Gorenstein algebras with Betti sequence $\beta'$ and the WLP.
\end{thm}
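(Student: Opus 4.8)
The plan is to realize $A$ as the special member of an explicit one‑parameter deformation whose general member has Betti sequence $\beta'$, and then to transport the WLP from the special member to the general one by an openness argument.

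First I would translate the combinatorial hypothesis into matrix language. Since $\beta'<\beta$ and $]\beta',\beta[=\emptyset$, Proposition \ref{bg}, item 3, tells us that $\beta'$ is obtained from $\beta$ by a single couple of cancellations of type $i,\vartheta-i$ (with $\vartheta=\vartheta_H$), and that this cancellation is allowed. Writing $I=\Pf(M)$ for a skew‑symmetric Buchsbaum--Eisenbud matrix $M$ of size $2m+1$ associated to $A$ (see \cite{BE}), the minimal generators of degrees $i$ and $\vartheta-i$ correspond to two indices, say $a$ and $b$, and the entry $M_{ab}$ has degree $\vartheta-d_a-d_b=0$. Because the Betti sequence of $A$ is exactly $\beta$ (the larger one), the resolution (\ref{r2}) is minimal, so this constant entry vanishes: $M_{ab}=0$.

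Next I would perturb precisely that entry. Set $M_\lambda:=M+\lambda(E_{ab}-E_{ba})$ and $A_\lambda:=R/\Pf(M_\lambda)$, so that $A_0=A$. The degree matrix of $M_\lambda$, and hence the list of twists appearing in the complex (\ref{r2}), is independent of $\lambda$; since the Hilbert series is the alternating sum of these twists and is unaffected by the cancellations that make a resolution non‑minimal, every fibre with $\codim\Pf(M_\lambda)=3$ has Hilbert function exactly $H$. The condition $\codim\Pf(M_\lambda)=3$ is open and holds at $\lambda=0$, hence on a dense open $U\ni0$, over which $\{A_\lambda\}$ is a flat family of Artinian Gorenstein algebras with Hilbert function $H$. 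For $\lambda\neq0$ the entry $M_{ab}=\lambda$ is a nonzero constant, and a skew change of basis clearing the corresponding row and column splits off a trivial summand, performing exactly the cancellation of type $i,\vartheta-i$; thus the Betti sequence drops below $\beta$, and by upper‑semicontinuity of the Betti numbers over $U$ together with the adjacency $]\beta',\beta[=\emptyset$ it must equal $\beta'$.

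Finally I would invoke the openness of the WLP. As the Hilbert function is constantly $H$ on $U$, the integer $t=t_H$ is the same for every fibre, and by Proposition \ref{postot} the WLP of $A_\lambda$ is equivalent to the injectivity of some $l\colon(A_\lambda)_{t-1}\to(A_\lambda)_t$, a map of free $k[\lambda]$‑modules of ranks $H(t-1)\le H(t)$. Since $A_0$ has the WLP, a fixed linear form $l_0$ makes the map injective at $\lambda=0$; injectivity is the non‑vanishing of a maximal minor, a polynomial in $\lambda$ that is nonzero at $0$ and hence nonzero on a dense open $V\ni0$. For $\lambda\in(U\cap V)\setminus\{0\}$ the algebra $A_\lambda$ is Gorenstein with Betti sequence $\beta'$ and enjoys the WLP, giving the required one‑dimensional family. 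The main obstacle is the middle step: verifying that the perturbed pfaffian family is flat with Hilbert function $H$ and that the general fibre genuinely attains $\beta'$ rather than degenerating further; this is exactly where one must use that the cancellation $i,\vartheta-i$ is admissible (Proposition \ref{bg}, item 3) together with adjacency, with the deformation‑theoretic backbone supplied by the Buchsbaum--Eisenbud structure theorem and the results of \cite{RZ1} and \cite{RZ2}.
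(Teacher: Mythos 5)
Your construction is exactly the paper's: you perturb the unique degree-zero (hence, by minimality, zero) entry $m_{ab}$ of the Buchsbaum--Eisenbud matrix into $\lambda$, obtaining a pencil $A_{(\lambda)}$ with $A_{(0)}=A$, and you transfer the WLP to general $\lambda$ by observing that maximal rank of $\times l$ in the single degree $t_H$ (Proposition \ref{postot}) is an open condition in $\lambda$. Your added care about the openness of $\codim\Pf(M_\lambda)=3$ and the constancy of the Hilbert function on that locus only makes explicit what the paper leaves implicit, and is fine.

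However, one step of your argument genuinely fails: the claim that upper-semicontinuity of Betti numbers together with $]\beta',\beta[=\emptyset$ forces the general fibre to have Betti sequence \emph{exactly} $\beta'$. The splitting argument and semicontinuity only give $\beta(A_{(\lambda)})\le\beta'$, and adjacency is no obstruction to strict inequality: a Betti sequence strictly smaller than $\beta'$ does not lie in the open interval $]\beta',\beta[$, so the hypothesis $]\beta',\beta[=\emptyset$ says nothing about it. What actually rules out further degeneration is the minimality of $M$ itself. After the skew change of basis clearing rows and columns $a,b$, the remaining $(2m-1)\times(2m-1)$ alternating matrix $N_\lambda$ has entries of the form $n_{ij}=m_{ij}+c_1\,m_{ia}m_{jb}/\lambda+c_2\,m_{ib}m_{ja}/\lambda$ with $c_1,c_2\in k$; since $d_a+d_b=\vartheta_H$, one checks $\deg(m_{ia}m_{jb})=\deg(m_{ib}m_{ja})=\deg m_{ij}$, so $N_\lambda$ is graded with the same degree matrix as the corresponding submatrix of $M$. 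If $\deg n_{ij}=0$, then each of the products $m_{ia}m_{jb}$ and $m_{ib}m_{ja}$, being of total degree zero, could only be nonzero if some entry of $M$ were a nonzero constant, which minimality of $M$ forbids; hence every degree-zero entry of $N_\lambda$ vanishes, the pfaffian complex of $N_\lambda$ is a \emph{minimal} resolution whenever $\codim\Pf(N_\lambda)=3$, and the Betti sequence of the general $A_{(\lambda)}$ is exactly $\beta'$. The paper asserts this fact without proof, so your proposal is not worse off than the printed argument on this point; but the justification you offer for it (adjacency) is incorrect and must be replaced by the degree computation above.
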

\begin{proof}
Since $]\beta',\beta[=\emptyset$ we get $\beta'$ from $\beta$ by performing a couple of cancellations of degrees $r$ and $\vartheta_H-r$ among the degrees of minimal generators and the degrees of first syzygies (see Remark 3.8 in \cite{RZ1}). Let $M$ be an alternating matrix whose submaximal pfaffians minimally generate $I.$ Let $M_{\widehat i}$ be the submatrix obtained by $M=(m_{ij})$ by deleting the $i$-th row and the $i$-th column. Let $p_i=\pf M_{\widehat i}.$ Then $I$ is minimally generated by $p_1,\ldots,p_{2m+1}.$ So we have a minimal generator $p_a$ of degree $r$ and a minimal generator $p_b$ of degree $\vartheta_H-r$ and two minimal first syzygies of the same degrees. So, by minimality, $m_{ab}=m_{ba}=0.$ Let $\lambda\in k$ and let $M_{(\lambda)}=(m'_{ij})$ be the alternating matrix such that $m'_{ab}=\lambda,$ $m'_{ba}=-\lambda$ and $m'_{ij}=m_{ij}$ otherwise. Let $I_{(\lambda)}$ be the ideal generated by the submaximal pfaffians of $M_{(\lambda)}.$ So $A_{(\lambda)}=R/I_{(\lambda)}$ is an Artinian standard graded Gorenstein algebra with Betti sequence $\beta$ for $\lambda=0$ and Betti sequence $\beta'$ for $\lambda\ne 0.$ Since $A$ has the WLP there exists $l\in R_1$ such that the multiplication map $l:A_{i-1}\to A_{i}$ has maximal rank for every $i.$ Now let us consider the multiplication map $l:(A_{(\lambda)})_{t-1}\to (A_{(\lambda)})_{t},$ where $t=t_H.$ Of course the rank of the latter map depends on $\lambda$ and it is maximal for $\lambda=0,$ since $A_{(0)}=A.$ So this rank is maximal for the generic $\lambda\in k,$ therefore the generic $A_{(\lambda)}$ has the WLP.
\end{proof}

\begin{cor}\label{bgen}
Let $\beta$ be any Gorenstein Betti sequence of codimension $3.$ Then there are infinitely many Artinian standard graded Gorenstein algebra with Betti sequence $\beta$ and enjoying the WLP. In particular, if $k$ is algebraically closed, the generic element of $\Gor\beta$ has the WLP.
\end{cor}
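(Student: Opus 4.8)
The plan is to propagate the WLP downward through the poset $\BG_H$, starting from the top element $\beta_{\max}(H)$ (where the WLP is automatic by Corollary \ref{bmax}) and using the deformation of Theorem \ref{canc} at each covering relation. First I would note that a Gorenstein Betti sequence $\beta$ determines its Hilbert function: reading the graded ranks off the resolution \eqref{r2} fixes the Hilbert series of any $A$ with $\beta(A)=\beta$, so $\beta\in\BG_H$ for a well-defined Gorenstein sequence $H$, and I may work inside the single poset $\BG_H$. By Proposition \ref{bg}, item 3, every element of $\BG_H$ is obtained from $\beta_{\max}(H)$ by performing couples of cancellations of type $(i,\vartheta_H-i)$, all of which are allowed. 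Performing these one at a time produces a saturated descending chain
\[
\beta_{\max}(H)=\beta_N>\beta_{N-1}>\cdots>\beta_1>\beta_0=\beta,
\]
in which each single cancellation is a covering relation, i.e. $]\beta_{j-1},\beta_j[=\emptyset$ for every $j$.

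Next I would run the induction. By Corollary \ref{bmax} every Artinian standard graded Gorenstein algebra with Betti sequence $\beta_N=\beta_{\max}(H)$ enjoys the WLP, so certainly at least one such algebra does. Assume inductively that there is an algebra with Betti sequence $\beta_j$ and the WLP. Since $\beta_{j-1}<\beta_j$ and $]\beta_{j-1},\beta_j[=\emptyset$, Theorem \ref{canc} produces a $1$-dimensional family of Artinian standard graded Gorenstein algebras with Betti sequence $\beta_{j-1}$ and the WLP. Descending to $j=0$ yields a $1$-dimensional family of algebras with Betti sequence $\beta$ enjoying the WLP; as $k$ is infinite this is an infinite family (and if $\beta=\beta_{\max}(H)$ already, the infinitely many members all have the WLP directly by Corollary \ref{bmax}). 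This proves the first assertion.

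For the final statement I would use the irreducibility of $\Gor\beta$, which comes from the Buchsbaum--Eisenbud parametrization: the alternating matrices with prescribed degrees of entries whose submaximal pfaffians generate an ideal with Betti sequence $\beta$ form an open subset of an affine space, mapping onto $\Gor\beta$, so $\Gor\beta$ is irreducible. By Proposition \ref{postot} the WLP for a member of $\Gor\beta$ is equivalent to the injectivity of $\times l:A_{t-1}\to A_t$ for some linear form $l$, with $t=t_H$. Regarding the matrix of this multiplication map as having entries polynomial in the coordinates of $\Gor\beta$ and in the coefficients of $l$, the locus in $\Gor\beta\times\mathbb{A}(R_1)$ where it has maximal rank is open; the family constructed above shows it is non-empty, hence dense by irreducibility, so its projection to $\Gor\beta$ (which by Proposition \ref{postot} is exactly the WLP-locus) is dense. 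Therefore, when $k$ is algebraically closed, the generic element of $\Gor\beta$ has the WLP.

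The step I expect to be the main obstacle is this last passage from ``infinitely many members with the WLP'' to ``the generic member has the WLP''. It genuinely requires both the irreducibility of the stratum $\Gor\beta$ and the fact that the WLP cuts out a dense locus rather than merely a non-empty one; the delicate point is that ``there exists $l$ with maximal rank'' is a priori only a constructible condition on $\Gor\beta$, and one must reduce via Proposition \ref{postot} to a single injectivity in degree $t$ and then invoke semicontinuity of the rank of $\times l$ for a generic $l$ to see that the maximal-rank locus is open in the product, so that its density is inherited by its image.
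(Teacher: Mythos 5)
Your proposal is correct and follows essentially the same route as the paper: start from $\beta_{\max}(H)$ where the WLP holds by Corollary \ref{bmax}, descend to $\beta$ by iterating Theorem \ref{canc} along a chain of single couples of cancellations, and then deduce the generic statement from the irreducibility of the stratum $\Gor\beta$ (which the paper simply cites from Diesel). Your additions---making explicit that each couple of cancellations is a covering relation in $\BG_H$, and the semicontinuity/density argument via Proposition \ref{postot}---are just careful elaborations of steps the paper leaves implicit.
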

\begin{proof}
Let $H$ be the Hilbert function corresponding to $\beta.$ By Corollary \ref{bmax} every Artinian standard graded Gorenstein algebra with Betti sequence $\beta_{\max}(H)$ has the WLP. Now applying iteratively Theorem \ref{canc} we can reach $\beta$ from $\beta_{\max}(H),$ obtaining families of Artinian standard graded Gorenstein algebra with Betti sequence $\beta$ and enjoying the WLP. The last assertion follows from the irreducibility of the stratum of $\Gor H$ defined by $\beta$ (see \cite{Di} section $3$).
\end{proof}

\vspace{1cm}
{\f
{\sc (A. Ragusa) Dip. di Matematica e Informatica, Universit\`a di Catania,\\
                  Viale A. Doria 6, 95125 Catania, Italy}\par
{\it E-mail address: }{\tt ragusa@dmi.unict.it} \par
{\it Fax number: }{\f +39095330094} \par
\vspace{.3cm}
{\sc (G. Zappal\`a) Dip. di Matematica e Informatica, Universit\`a di Catania,\\
                  Viale A. Doria 6, 95125 Catania, Italy}\par
{\it E-mail address: }{\tt zappalag@dmi.unict.it} \par
{\it Fax number: }{\f +39095330094}
}

\end{document}